\documentclass[12pt,a4paper]{article}

\usepackage{mathtools,amssymb,mathrsfs}
\usepackage{amsthm}
\usepackage{enumerate}
\usepackage{physics}
\usepackage[hidelinks]{hyperref}
\usepackage{algorithm}
\usepackage{algpseudocode}
\usepackage{graphicx}
\usepackage[backend=biber, style=numeric, sorting=nty]{biblatex}
\usepackage{float}

\hypersetup{
        colorlinks   = true,
        citecolor    = blue,
        linkcolor=blue  
}

\newcommand{\footremember}[2]{%
    \footnote{#2}
    \newcounter{#1}
    \setcounter{#1}{\value{footnote}}%
}
\newcommand{\footrecall}[1]{%
    \footnotemark[\value{#1}]%
} 

\newtheorem{theorem}{Theorem}[section]
\newtheorem{lemma}[theorem]{Lemma}
\newtheorem{prop}[theorem]{Proposition}

\theoremstyle{definition}
\newtheorem{definition}[theorem]{Definition}

\theoremstyle{remark}
\newtheorem*{remark}{Remark}

\DeclareMathOperator{\Cov}{Cov}

\DeclareMathOperator*{\arginf}{arginf}
\DeclareMathOperator{\dist}{dist}
\DeclareMathOperator{\Vol}{Vol}
\DeclareMathOperator{\dVol}{dVol}
\DeclareMathOperator{\id}{id}
\DeclareMathOperator{\St}{St}
\DeclareMathOperator{\Gr}{Gr}
\DeclareMathOperator{\inv}{inv}
\newcommand{\ddf}[3]{\frac{\dd^{#1} #2}{\dd #3^{#1}}}

\DeclareNameAlias{author}{last-first}

\addbibresource{references.bib}

\title{Autoregressive Processes on Stiefel and Grassmann Manifolds}

\author{Jordi-Llu\'{i}s Figueras\footremember{upps}{Department of Mathematics, Uppsala University}\footremember{huawei}{Partially funded by Huawei technologies.} \\
\small{\texttt{jordi-lluis.figueras@math.uu.se}}
\and
Aron Persson\footrecall{upps} \footrecall{huawei} \footnote{Corresponding Author}   \\
\small{\texttt{aron.persson@math.uu.se}}
}

\begin{document}

\maketitle

\begin{abstract}
System identification of autoregressive processes on Stiefel and Grassmann manifolds are presented and studied. We define the system parameters as elements in the orthogonal group and we show that the system can be estimated by averaging over observations. Then we propose an algorithm on how to compute these system parameters using conjugate gradient descent on Stiefel and Grassmann manifolds, respectively.
\end{abstract}

\textbf{Keywords} Statistics on manifolds; differential geometry; Homogeneous spaces; directional statistics

\textbf{2020 Mathematics subject classification} 62R30; 53Z50



\section{Introduction}
\label{sec:intro}

In modern signal processing and MIMO systems, a state is described by a general $n$ by $k$ matrix. But it is only the orthogonal part of the state that is measured, i.e. one measures the orthogonal part of the polar decomposition of a $n$ by $k$ matrix. In other words the measurements are points on Grassmann or Stiefel manifolds, depending on application and context, see for example \cite{Kim2020stochcons,kong2022momentum,li2013riemannian,schwarz2014predictive,solo2014approach,xue2020blind}.

Now consider a sequence of measurements over time, a time-series that takes values on a Grassmann or Stiefel manifold. If one now wants to make a prediction of future states or analyze the dynamics of such a time series, we have to make a suitable model for the time series. In $\mathbb{R}$-valued statistics, the most basic model used in time series analysis are the autoregressive processes. This then begs the question: What is a natural definition of an autoregressive process on Stiefel and Grassmann manifolds? Moreover, how can one estimate the parameters of the model in a reasonable manner?

In \cite{xavier2006generalization}, Xavier and Manton introduced some AR-processes on Riemannian manifolds, a treatise we draw inspiration from and aim to specialize to the class of homogeneous spaces. The goal is to enable us to estimate the system parameters on Grassmann and Stiefel manifolds. Since, by definition, a homogeneous space has a transitive Lie-group action, this naturally guides us to consider system parameters as elements in the Lie group. In Proposition \ref{prop:Argroup}, we show that one may model an AR($N$)-processes on Stiefel and Grassmann manifolds by considering the system parameter as an element in the orthogonal group, see Equation \eqref{eq:liegroupactionAR}. Moreover, by Theorem \ref{thm:ARexpect} we show that the system parameter can be estimated, and numerically computed, by averaging out over the observations. Using the definition for the expectation on manifolds, we propose a conjugate gradient descent method in Algorithm \ref{alg:stiefelgrad}, which utilize an approximation of the distance function for the Grassmann and Stiefel manifolds.

This paper is relevant to anyone doing directional statistics, the most common form is that of directional statistics on the spheres $\mathbb{S}^n$. The spheres are special cases of the Stiefel manifolds, more precisely $\mathbb{S}^n \cong \St_{n+1,1}$. Ever since Fischer in \cite{fisher1953dispersion} popularized the idea of modeling data on spheres in order to ascertain geological information on rocks, see also \cite{mardia2025fisher}, there has been an abundance of applications of directional statistics. Directional statistics has found its way into meteorology \cite{bowers2000directional,li2018wavefront,nunez2015bayesian,Nunez-Gutierrez-Escarela-2011}, into medicine \cite{demir2019application,karaibrahimoglu2021circular} and into machine learning \cite{kanamori2012non}, to name a few.

We start, in Section \ref{sec:AutRM}, by giving an overview on what has been done previously in the area of autoregressive processes on Riemannian manifolds, and general statistics on manifolds. Then, in Section \ref{sec:ARhomo}, we specialize to homogeneous spaces and we give some new results. Thereafter, in Section \ref{sec:conjgrad}, we give some algorithms that estimate and approximate the system parameters on the Stiefel and Grassmann manifold using the conjugate gradient method. A fundamental introduction to Riemannian geometry is given in Appendix \ref{sec:basicriemgeom} that introduces concepts needed in order to understand the reasoning in this paper. In the second part of the appendix, Appendix \ref{sec:simulations}, one can find some figures showing simulations for a select few Stiefel and Grassmann manifolds illustrating the algorithms given in Section \ref{sec:conjgrad}.

\section{Statistics on Riemannian Manifolds}
\label{sec:AutRM}

Throughout, let $(\Omega, \mathcal{A},\mathbb{P})$ be a probability space. Let $M$ be a Riemannian manifold. We shall recall a slightly modified, but still equivalent, version of the "wrapped process" definition of an AR-process found in \cite{xavier2006generalization}. Denote $\exp_p$ and $\log_p$ to be the Riemannian exponential map centered at $p$ and its inverse, respectively, see Definition \ref{def:geodexplog}. Moreover, the parallel transport from $p\in M$ to $q\in M$ along a geodesic from $p$ to $q$ is written as $P_{p,q}:T_pM\rightarrow T_q M$.

\begin{definition}
\label{def:arprocess}
Let $M$ be a connected and geodesically complete Riemannian manifold, and let $Z_\ell\in M$, $\ell\in \mathbb{Z}$ be a time-series. Let $N \in \mathbb{N}$, and suppose there are $N$ (1,1)-tensor fields $A_j:TM\rightarrow TM$, i.e. for every $p\in M$ it holds that
\[
A_j\mid_{T_p M}:T_pM \rightarrow T_pM
\]
is linear. Then, an \emph{AR($N$)-process} on $M$ is defined as
\[
Z_\ell = \exp_{Z_{\ell-1}}\left(\varepsilon_\ell+ \sum_{j=1}^N A_j P_{Z_{\ell-j},Z_{\ell-1}}( \log_{Z_{\ell-j}}(Z_{\ell-j-1})) \right),
\]
where $\varepsilon_\ell:\Omega \rightarrow T_{Z_{\ell-1}}M$ is a random variable with $\mathbb{E}[\varepsilon_\ell]=0$, $\Cov(\varepsilon_\ell)=\id_{T_{Z_{\ell-1}}M}$.
\end{definition}

For our purposes we need the notion of expected value of a Riemann manifold-valued random variable. This is precisely the well known least square minimizer on $\mathbb{R}^n$ adapted and generalized to Riemannian manifolds. Suppose $\dist$ is the Riemannian distance function on $M$ and $\Vol$ is the Riemannian volume measure on $M$. Following \cite{dubey2019frechet,pennec2006intrinsic}, the expectation of an absolutely continuous random variable $X:(\Omega,\mathcal{F},\mathbb{P}) \rightarrow M$ is then any point $y\in M$ satisfying
\begin{equation}
\mathbb{E}[X] = \arginf_{y\in M} \int_M \dist^2(y,x)p_X(x) \dVol(x).
\label{eq:expectation}
\end{equation}
Here $p_X = \frac{\dd X_*\mathbb{P}}{\dVol}$ is the Radon-Nikodyn derivative of the push forward measure, defined by $X_*\mathbb{P}(A)= \mathbb{P}(X^{-1}(A))$ for any Borel subset $A\subseteq M$, with respect to the Riemannian volume measure. Note that in contrast to random variables on $\mathbb{R}^n$, the expected value need not be uniquely defined. This is clearly illustrated by considering a uniformly distributed random variable on $\mathbb{S}^2$, i.e. constant probability density function, then any, and thus all, points on $\mathbb{S}^2$ minimize the integral above.

\section{AR-Processes on Homogeneous Spaces by Lie Group Action}
\label{sec:ARhomo}

Let $G$ be a compact Lie group with bi-invariant Riemannian metric, and let $M$ be a geodesically complete $G$-homogeneous space with metric induced by $G$. See Appendix \ref{sec:basicriemgeom} for the definition.

\begin{prop}
\label{prop:Argroup}
Suppose $Z_\ell$ is an AR($N$)-process on $M$ as in Definition \ref{def:arprocess} with system parameters $A_j$.
Then, for every $1\leq j\leq N$ and any pair of fixed points $Z_{\ell-j},Z_{\ell-j-1}\in M$ there is a $\Phi_j\in G$ such that
\begin{equation}
\Phi_j Z_{\ell-j} = \exp_{Z_{\ell-j}}(A_j \log_{Z_{\ell-j}}(Z_{\ell-j-1})).
\label{eq:liegrouptensorfield}
\end{equation}
\end{prop}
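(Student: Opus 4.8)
The plan is to exploit the transitivity of the $G$-action on $M$ together with the fact that $M$ carries the metric induced from the bi-invariant metric on $G$. Fix $j$ and write $p := Z_{\ell-j}$ and $q := Z_{\ell-j-1}$; set $v := A_j \log_p(q) \in T_pM$ and $r := \exp_p(v)$. We want a $\Phi_j \in G$ with $\Phi_j p = r$. Since the action is transitive, such group elements certainly exist for \emph{any} target point $r$; the content of the statement is really just the existence claim, so the proof should be short. First I would invoke transitivity directly: because $M$ is $G$-homogeneous, the orbit map $G \to M$, $g \mapsto gp$, is surjective, hence there is at least one $\Phi_j \in G$ with $\Phi_j p = r = \exp_{p}(A_j \log_{p}(q))$, which is exactly \eqref{eq:liegrouptensorfield}.

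If one wants a more geometric, canonical choice (which is likely what the authors have in mind for later use in \eqref{eq:liegroupactionAR}), I would instead construct $\Phi_j$ so that it realizes the geodesic from $p$ to $r$ as a one-parameter motion. The key step is the standard fact for normal homogeneous (or naturally reductive) spaces: geodesics through $p$ are orbits of one-parameter subgroups, i.e. $\exp_p(tw) = \exp_G(tX)\cdot p$ for a suitable $X$ in the Lie algebra $\mathfrak{g}$ depending linearly on $w \in T_pM$ (via the identification $T_pM \cong \mathfrak{m}$, the horizontal complement of the isotropy subalgebra). Applying this with $w = v$ and $t=1$ gives $\Phi_j := \exp_G(X) \in G$ satisfying $\Phi_j p = \exp_p(v) = r$, as desired. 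This uses that the metric on $M$ is induced from a bi-invariant metric on $G$, which is precisely the hypothesis in force at the start of Section \ref{sec:ARhomo}; for the Stiefel and Grassmann manifolds with $G = O(n)$ this is the well-known description of geodesics via matrix exponentials.

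The main obstacle, such as it is, is conceptual rather than technical: one must be careful about what is being asserted. The points $Z_{\ell-j}, Z_{\ell-j-1}$ are fixed, so $v$ and $r$ are honest fixed points/vectors, and there is no measurability or continuity requirement on the assignment $(Z_{\ell-j},Z_{\ell-j-1}) \mapsto \Phi_j$ in the statement itself — so transitivity alone suffices. The only place where care is genuinely needed, if one takes the geodesic-orbit route, is to justify that $\exp_p$ of a horizontal vector equals the action of $\exp_G$ of the corresponding algebra element; I would cite the relevant fact from Appendix \ref{sec:basicriemgeom} (or a standard reference on naturally reductive homogeneous spaces) rather than reprove it. I expect the authors' proof to be essentially the one-line transitivity argument, possibly with the remark that $\Phi_j$ is non-unique (its isotropy-subgroup coset is what is determined), mirroring the non-uniqueness of the Fréchet mean noted just above.
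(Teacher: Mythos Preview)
Your proposal is correct and matches the paper's approach: the paper's proof is precisely a concrete instance of the transitivity argument, picking representatives $\Psi,\xi\in G$ of $Z_{\ell-j}$ and $q$ under the quotient projection $\pi:G\to M$ and setting $\Phi_j=\xi\Psi^{-1}$; indeed, the authors themselves remark immediately afterwards that ``the above may also be concluded by observing that the Lie group action on homogeneous spaces is transitive.'' Your second, geodesic-orbit construction via $\exp_G$ on the horizontal lift is a pleasant canonical refinement, but the paper does not use or need it for this proposition.
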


\begin{proof}
Denote
\[
q = \exp_{Z_{\ell-j}}(A_j \log_{Z_{\ell-j}}(Z_{\ell-j-1})),
\]
and
\[
\pi: G\rightarrow M=G/H
\]
as the quotient projection from $G$ to $M$. Note that since $\pi$ is defining the Lie-group action on $M$, we have 
\[
\Phi_j Z_{\ell-j} = \Phi_j \pi(\Psi)=\pi(\Phi_j \Psi)
\]
where $\Phi_j,\Psi\in G$ and $\Psi$ is a representative for $Z_{\ell-j}$. Suppose $\xi\in G$ is such that $\pi(\xi)=q$ then taking $\Phi_j = \Psi \xi^{-1}$ is a Lie group element such that the equality in \eqref{eq:liegrouptensorfield} holds true.
\end{proof}

Note that the above may also be concluded by observing that the Lie group action on homogeneous spaces is transitive.

\begin{remark}
One important reason one may want to consider system parameters as Lie group elements is that for Stiefel and Grassmann manifolds the degrees of freedom for finding parameters $\Phi \in O(n)$ is much smaller than the degrees of freedom for finding tensor fields $A_j\in \mathcal{T}^1_1(M)$ so they will in general be less taxing to compute. Moreover, what the proof above indicates is that due to a high degree of multicollinearity of the system parameters, we can without loss of generality assume that the system parameters are elements in the Lie group instead.
\end{remark}

Therefore, we henceforth consider autoregressive processes of the following form
\begin{equation}
Z_\ell = \exp_{Z_{\ell-1}}\left(\varepsilon_\ell + \sum_{j=1}^N P_{Z_{\ell-j},Z_{\ell-1}}\log_{Z_{\ell-j}}(\Phi_j Z_{\ell-j}) \right)
\label{eq:autoregressivehomo}
\end{equation}
and with $\Cov(\varepsilon_\ell)=\sigma^2 \id_{T_{Z_{\ell-1}}M}$.

In what follows, we want to ensure that for an AR(1)-processes with reasonable assumptions on the noise, on average satisfy $Z_\ell=\Phi Z_{\ell-1}$. We shall impose a relatively weak symmetry condition on the noise, the condition of \textit{reflection symmetry}.

\begin{definition}
Let $M$ be a complete Riemannian manifold. A function $f:M\rightarrow M$ is said to be \textit{reflection symmetric} around a point $x\in  M$ if for all $V\in T_x M$ it holds that
\begin{equation}
\dist(f(\exp_{x}(V)),f(x) )=\dist(f(\exp_{x}(-V)),f(x)).
\label{eq:liegroupactionAR}
\end{equation}
Similarly a function $s:M \rightarrow \mathbb{R}$ is said to be \textit{reflection symmetric} around $x\in M$ if
\[
s(\exp_x(V))=s(\exp_x(-V))
\]
for all $V\in T_x M$.
\end{definition}

Next is a fairly general lemma which connects reflection symmetric functions $f:M\rightarrow M$ and reflection symmetric probability density functions.

\begin{lemma}
\label{lem:lipreflect}
Let $M$ be a Riemannian manifold, and let
\[
X:(\Omega,\mathcal{F},\mathbb{P}) \rightarrow M
\]
be an absolutely continuous random variable. Suppose $m=\mathbb{E}[X]$ exists and is unique. Assume $p_X(y)$ is reflection symmetric around $m$. Let $f:M\rightarrow M$ be an injective, non-exhaustive and reflection symmetric function and assume its inverse $f^{-1}:M\rightarrow M$ is reflection symmetric as well.
Then, the probability density function of $f(X)$ is reflection symmetric around $f(m)$.
\end{lemma}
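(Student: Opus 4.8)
The plan is to compute the density of $f(X)$ by a change of variables and then reduce the asserted reflection symmetry to symmetry properties of the factors that appear. Since $f$ is injective it is a bijection onto its image $f(M)$, which is a proper subset because $f$ is nonexhaustive, so $f^{-1}$ is genuinely defined on $f(M)$; under the regularity implicit in the statement (it suffices that $f$ be bi-Lipschitz, so that the area formula applies) the pushforward $f(X)_{*}\mathbb{P}$ is absolutely continuous, and for Borel $A\subseteq M$
\[
f(X)_{*}\mathbb{P}(A)=\mathbb{P}\bigl(X\in f^{-1}(A)\bigr)=\int_{A\cap f(M)}\bigl(p_X\circ f^{-1}\bigr)(y)\,\bigl|J_{f^{-1}}(y)\bigr|\,\dVol(y),
\]
where $J_{f^{-1}}$ is the Jacobian of $f^{-1}$ relative to the two Riemannian volume forms. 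Hence $p_{f(X)}(y)=\mathbf{1}_{f(M)}(y)\,p_X\bigl(f^{-1}(y)\bigr)\,\bigl|J_{f^{-1}}(y)\bigr|$ for $\Vol$-a.e.\ $y$.

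It then remains to check, for every $W\in T_{f(m)}M$, that each of the three factors takes the same value at $\exp_{f(m)}(W)$ and at $\exp_{f(m)}(-W)$. Write $\rho_x$ for the locally defined geodesic reflection $\exp_x(V)\mapsto\exp_x(-V)$. The key step, from which everything follows, is the commutation relation
\[
f\circ\rho_m=\rho_{f(m)}\circ f,\qquad\text{equivalently}\qquad f^{-1}\circ\rho_{f(m)}=\rho_m\circ f^{-1};
\]
that is, if $f(\exp_m(V))=\exp_{f(m)}(W)$ then $f(\exp_m(-V))=\exp_{f(m)}(-W)$. Granting it, invariance of $f(M)$ under $\rho_{f(m)}$ is immediate; for the density factor, putting $x=f^{-1}(\exp_{f(m)}(W))=\exp_m(V)$ gives $f^{-1}(\exp_{f(m)}(-W))=\exp_m(-V)$, so $p_X(f^{-1}(\exp_{f(m)}(-W)))=p_X(\exp_m(-V))=p_X(\exp_m(V))$ by reflection symmetry of $p_X$ around $m$; and for the Jacobian factor, from $f^{-1}=\rho_m\circ f^{-1}\circ\rho_{f(m)}$ and the chain rule, the factor $|J_{f^{-1}}|$ agrees at $\exp_{f(m)}(W)$ and $\exp_{f(m)}(-W)$ as soon as the geodesic reflections $\rho_m,\rho_{f(m)}$ are volume preserving, which is true at their centres and, more to the point, everywhere when $M$ is locally symmetric --- in particular on the Grassmannian.

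I expect the commutation relation to be the main obstacle. Reflection symmetry of $f$ only delivers the scalar identity $\dist(f(\exp_m(V)),f(m))=\dist(f(\exp_m(-V)),f(m))$, which pins down the length $|W|$ of the log-vector of $f(\exp_m(-V))$ but not its direction, so in dimension $\ge 2$ it does not by itself yield $f(\exp_m(-V))=\exp_{f(m)}(-W)$. The route I would take is to use the two reflection-symmetry hypotheses together with injectivity: reflection symmetry of $f$ matches radii in the forward direction, reflection symmetry of $f^{-1}$ matches them coming back, and injectivity of $f$ on a normal neighbourhood of $m$ is what must be exploited to upgrade ``equal radius'' to ``antipodal''; this is automatic when $f$ is conformal or isometric onto its image near $m$, since then $Df_m$ is a scalar multiple of a linear isometry and commutes with $V\mapsto-V$. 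Besides this, one has to clear the routine points --- Borel measurability of $f^{-1}$ via Lusin--Souslin, absolute continuity of $f(X)_{*}\mathbb{P}$, validity of the change-of-variables formula under the assumed regularity, and the stated behaviour of $|J_{f^{-1}}|$ under the geodesic reflections, the last of which reduces to the standard computation of the volume distortion of normal coordinates at $\pm V$.
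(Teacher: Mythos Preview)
Your approach is exactly the paper's: write $p_{f(X)}=p_X(f^{-1}(\cdot))\,|J_{f^{-1}}(\cdot)|$ via the change-of-variables formula and argue that each factor is invariant under the geodesic reflection at $f(m)$. The paper's proof is in fact briefer than yours and does not do more; it simply asserts that ``since $f^{-1}$ is reflection symmetric, and this symmetry is preserved under geodesics, the same will hold for its derivative,'' and concludes.

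The obstacle you flag is genuine and is not addressed in the paper either. With the paper's definition, reflection symmetry of $f$ around $m$ only gives
\[
\dist\bigl(f(\exp_m V),f(m)\bigr)=\dist\bigl(f(\exp_m(-V)),f(m)\bigr),
\]
which in dimension $\ge 2$ does not force $f(\exp_m(-V))=\rho_{f(m)}\bigl(f(\exp_m V)\bigr)$; the same remark applies to $f^{-1}$. So the commutation $f^{-1}\circ\rho_{f(m)}=\rho_m\circ f^{-1}$ that both you and the paper need is not a consequence of the stated hypotheses. In the paper's actual applications this is harmless: the only $f$'s used are the Lie-group action $p\mapsto\Phi p$ and the maps $\exp_x$, $\log_x$, and for those the full commutation with geodesic reflection holds directly (Lemma~\ref{lem:distancelem}\,$ii)$ is precisely the antipodal statement for the group action, and $\exp_x$, $\log_x$ intertwine $V\mapsto-V$ with $\rho_x$ by definition). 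So your diagnosis is correct: the lemma as stated is under-hypothesised, the paper's proof glosses over the same point, and the intended use cases satisfy the stronger intertwining property you isolate.
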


\begin{proof}
Since $f:M\rightarrow M$ is Lipschitz by non-exhaustivity, $f(X)$ is absolutely continuous. Now suppose $p_{f(X)}$ is the probability density function for $f(X)$. Note that by the inverse mapping theorem for Lipschitz maps $D f^{-1}(y)$ exists for almost all $y\in M$. Moreover, the probability density function is by the standard change of variables formula for integrals
\[
p_{f(X)}(x) = p_{X}(f^{-1}(x))\det (D(f^{-1}))(x).
\]
Since $f^{-1}$ is reflection symmetric, and this symmetry is preserved under geodesics, the same will hold for its derivative. Therefore, $p_{f(X)}$ is reflection symmetric.
\end{proof}

It is well known that for any if $M$ is a Riemannian $G$-homogeneous space and if then, action from $G$ acts as isometries, that is it holds that
\begin{equation}
\dist(\Phi p, \Phi q) = \dist(p,q)
\label{eq:liegroupactionlip}
\end{equation}
for all $p,q\in M$ and all $\Phi \in G$. In other words \eqref{eq:liegroupactionlip} shows that the Lie group action is Lipschitz with Lipschitz constant $1$, i.e. non-expansive, and we shall see that (Matrix) Lie group action is reflection symmetric as well.

\begin{lemma}
\label{lem:distancelem}
Let $G$ be a matrix Lie-group with bi-invariant Riemannian metric, and let $M$ be a $G$-homogeneous space with induced Riemannian metric from $G$. Then, for any $p,q\in M$, and any $\Phi \in G$, it holds that if $X = \log_p(q)$, and $\hat{q}=\exp(-X)p$, then
\[
\dist(\Phi p,\Phi q)= \dist(\Phi p, \Phi \hat{q}).
\]
\end{lemma}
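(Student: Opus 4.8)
The plan is to pass to the Riemannian submersion $\pi\colon G\to M=G/H$ that defines the induced metric, and to use two consequences of bi-invariance on $G$: every left translation $L_\Phi$ is an isometry of $G$, and so is the inversion $\iota\colon g\mapsto g^{-1}$.

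For part $i)$ I would argue by lifting curves. Let $c$ be a piecewise-smooth curve in $M$ from $p$ to $q$, and let $\tilde c$ be its horizontal lift in $G$ starting in $\pi^{-1}(p)$; by the submersion property $\tilde c$ has the same length as $c$ and ends in $\pi^{-1}(q)$. Since $\pi\circ L_\Phi=\Phi\cdot\pi$ and $L_\Phi$ is an isometry of $G$, the curve $L_\Phi\circ\tilde c$ joins $\pi^{-1}(\Phi p)$ to $\pi^{-1}(\Phi q)$ with unchanged length, and its projection $\pi\circ L_\Phi\circ\tilde c$ is a curve in $M$ from $\Phi p$ to $\Phi q$ of length at most that of $L_\Phi\circ\tilde c$, because projecting along a submersion does not increase length. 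Taking the infimum over $c$ gives $\dist(\Phi p,\Phi q)\le\dist(p,q)$. (Applying the same estimate with $\Phi^{-1}$ in place of $\Phi$ and $\Phi p,\Phi q$ in place of $p,q$ turns this into an equality, so $\Phi$ in fact acts by an isometry; only the stated inequality is needed below.)

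For part $ii)$, the isometry property just noted gives $\dist(\Phi p,\Phi q)=\dist(p,q)$ and $\dist(\Phi p,\Phi\hat q)=\dist(p,\hat q)$, so the statement reduces to proving $\dist(p,q)=\dist(p,\hat q)$ for $\hat q=\exp_p(-X)$, $X=\log_p(q)$. Here I would lift again: choose $g\in\pi^{-1}(p)$ and let $\xi\in\mathfrak{m}=\mathfrak{h}^{\perp}$ be the left-translate to the identity of the horizontal lift of $X$ at $g$. By bi-invariance $t\mapsto g\exp_G(t\xi)$ is a horizontal geodesic in $G$, hence projects to the geodesic $t\mapsto\exp_p(tX)$ in $M$, so $q=\pi(g\exp_G\xi)$ and $\hat q=\pi(g\exp_G(-\xi))$. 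The distance formula for the quotient submersion together with left-invariance then gives $\dist(p,q)=\inf_{h\in H}\dist_G\!\big(e,\exp_G(\xi)h\big)=\dist_G\!\big(\exp_G(-\xi),H\big)$ and, symmetrically, $\dist(p,\hat q)=\dist_G\!\big(\exp_G(\xi),H\big)$. Finally the inversion $\iota$ is an isometry of $G$ with $\iota(H)=H$ and $\iota(\exp_G(-\xi))=\exp_G(\xi)$, so it equates these two set-to-point distances, which is exactly $\dist(p,q)=\dist(p,\hat q)$.

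The step I expect to be the crux is the reduced identity $\dist(p,q)=\dist\!\big(p,\exp_p(-\log_p q)\big)$. It is not implied by $\Phi$ being an isometry, and on a general (non-symmetric) manifold it can fail once $-X$ lies past the cut locus of $p$; what rescues it is precisely the homogeneous-space hypotheses, namely that every geodesic from $p$ is the projection of a one-parameter-subgroup geodesic of $G$ and that inversion is an isometry of a bi-invariant metric. These are the only places the hypotheses of the lemma are genuinely used; the remaining ingredients — length-preservation of horizontal lifts, non-increase of length under a submersion, and the identity $\dist_{G/H}(\pi g_1,\pi g_2)=\inf_{h\in H}\dist_G(g_1,g_2h)$ — are standard facts about Riemannian submersions that I would cite rather than reprove.
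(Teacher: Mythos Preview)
Your argument is correct and takes a genuinely different route from the paper's. The paper works pointwise at the Lie-algebra level: for $i)$ it writes $q=\exp_{\mathrm{M}}(X)p$ with $X$ horizontal, observes $\Phi q=\exp_{\mathrm{M}}(\Phi X\Phi^{-1})\Phi p$, and bounds $\dist(\Phi p,\Phi q)$ by $\lVert\pi_{\text{Horizontal}}(\Phi X\Phi^{-1})\rVert\le\lVert\Phi X\Phi^{-1}\rVert=\lVert X\rVert$; for $ii)$ it repeats the same identity with $-X$ and invokes linearity of $\pi_{\text{Horizontal}}$ to equate the two norms. Your approach instead uses the Riemannian-submersion formalism globally: horizontal lifting plus the fact that $L_\Phi$ is an isometry of $G$ for $i)$, and the quotient distance formula $\dist_{G/H}(\pi g_1,\pi g_2)=\dist_G(g_1,g_2H)$ together with the inversion isometry $\iota(H)=H$ for $ii)$.

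What each buys: the paper's computation is short and entirely explicit for matrix groups, but its equality ``$\dist(\Phi p,\Phi q)=\lVert\pi_{\text{Horizontal}}(\Phi X\Phi^{-1})\rVert$'' tacitly assumes the projected one-parameter curve is length-minimising, which is really only an upper bound in general; this does not harm the stated inequality, and for $ii)$ the symmetry under $X\mapsto -X$ survives. Your argument avoids that delicacy, works for any bi-invariant $G$ (not just matrix groups), and as you note yields equality in $i)$ immediately by applying the bound to $\Phi^{-1}$; the paper only remarks that equality holds in the symmetric-space case, so you are in fact recovering something a bit sharper. Your identification of the reduced identity $\dist(p,q)=\dist(p,\exp_p(-\log_p q))$ as the crux of $ii)$ is apt, and the inversion-isometry trick is a clean way to see it that the paper does not use.
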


\begin{proof}
Suppose $X\in T_eG$ is horizontal such that $q=\exp_{\mathrm{M}}(X)p$ and set $\hat{q}=\exp_{\mathrm{M}}(-X)p$. Then, it holds that
\[
\dist(\Phi q,\Phi p) = \norm{\pi_{\text{Horizontal}}\left( \Phi X \Phi^{-1}\right)}_G
\]
and
\[
\dist(\Phi \hat{q},\Phi p) = \norm{\pi_{\text{Horizontal}}\left( -\Phi X \Phi^{-1}\right)}_G.
\]
Since the horizontal projection is a linear map the result follows.
\end{proof}

Before we go to the main theorem we shall need to check that the space of reflection symmetric variables on Euclidean-space indeed is closed under addition.

\begin{lemma}
\label{lem:convsymm}
Let $X,Y:(\Omega,\mathcal{F},\mathbb{P})\rightarrow \mathbb{R}^n$ be absolutely continuous and independent random variables. If the probability density function of both of them are reflection symmetric around $\mathbb{E}[X]=x$ and $\mathbb{E}[Y]=y$, respectively, then the probability density function of $X+Y$ is reflection symmetric around $x+y$.
\end{lemma}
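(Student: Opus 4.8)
The plan is to reduce everything to the convolution formula for the density of a sum of independent random variables. First I would unwind the hypothesis: on $\mathbb{R}^n$ with its flat metric geodesics are straight lines, so $\exp_x(V)=x+V$, and therefore ``$p_X$ reflection symmetric around $x=\mathbb{E}[X]$'' is just the elementary statement $p_X(x+v)=p_X(x-v)$ for (Lebesgue-)almost every $v\in\mathbb{R}^n$, and likewise $p_Y(y+v)=p_Y(y-v)$. The goal becomes to show $p_{X+Y}\bigl((x+y)+v\bigr)=p_{X+Y}\bigl((x+y)-v\bigr)$.

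Next, since $X$ and $Y$ are independent and absolutely continuous, $X+Y$ is absolutely continuous with density given by the convolution $p_{X+Y}(z)=\int_{\mathbb{R}^n}p_X(z-w)\,p_Y(w)\,dw$. Substituting $w=y+u$ yields $p_{X+Y}\bigl((x+y)+v\bigr)=\int_{\mathbb{R}^n}p_X\bigl(x+(v-u)\bigr)\,p_Y(y+u)\,du$, and applying the reflection symmetry of $p_X$ to the argument $x+(v-u)$ rewrites this as $\int_{\mathbb{R}^n}p_X\bigl(x-(v-u)\bigr)\,p_Y(y+u)\,du=\int_{\mathbb{R}^n}p_X(x-v+u)\,p_Y(y+u)\,du$.

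Then I would carry out the analogous manipulation on the other side: $p_{X+Y}\bigl((x+y)-v\bigr)=\int_{\mathbb{R}^n}p_X(x-v-u)\,p_Y(y+u)\,du$, and the change of variables $u\mapsto -u$ (which preserves Lebesgue measure on $\mathbb{R}^n$, the Jacobian having absolute value $1$) turns this into $\int_{\mathbb{R}^n}p_X(x-v+u)\,p_Y(y-u)\,du$; finally the reflection symmetry of $p_Y$ replaces $p_Y(y-u)$ by $p_Y(y+u)$, producing exactly the expression obtained for $p_{X+Y}\bigl((x+y)+v\bigr)$. This establishes the claim. (Alternatively one could argue via characteristic functions: reflection symmetry of $p_X$ around $x$ is equivalent to $e^{-i\langle t,x\rangle}\varphi_X(t)$ being real-valued; this property is multiplicative, and $\varphi_{X+Y}=\varphi_X\varphi_Y$, so $e^{-i\langle t,x+y\rangle}\varphi_{X+Y}(t)$ is real, which by Fourier inversion is equivalent to reflection symmetry of $p_{X+Y}$ around $x+y$.)

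The computation is routine; the only points demanding care are the justification that the density of $X+Y$ is literally the convolution of the two densities — this is exactly where independence and absolute continuity enter, and it may be worth citing the standard fact rather than reproving it — and keeping track that the substitution $u\mapsto -u$ leaves the integral over $\mathbb{R}^n$ unchanged. I do not anticipate any serious obstacle.
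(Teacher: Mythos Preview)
Your argument is correct and follows essentially the same route as the paper: both reduce to the convolution formula and then verify $p_{X+Y}(x+y+v)=p_{X+Y}(x+y-v)$ by a short chain of affine substitutions together with the reflection identities $p_X(x+\cdot)=p_X(x-\cdot)$ and $p_Y(y+\cdot)=p_Y(y-\cdot)$. Your write-up is in fact a bit more explicit about which symmetry is invoked at each step, and the characteristic-function remark is a pleasant alternative the paper does not mention.
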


\begin{proof}
Note that it is classical that the resulting probability density function is
\[
p_{X+Y}(x) = \int_{\mathbb{R}^n} p_X(x-z)p_Y(z) dz
\]
i.e. the convolution of the two prior probability density functions. Also, it holds that
\[
\begin{aligned}
p_{X+Y}(x+y+v) &= \int_{\mathbb{R}^n} p_X(x+y+v-z) p_Y(z) dz&& z\mapsto y-z\\
&= \int_{\mathbb{R}^n} p_X(x+v+z) p_Y(y-z) dz\\
&= \int_{\mathbb{R}^n} p_X(x-v-z) p_Y(y+z) dz&& z\mapsto -z\\
&= \int_{\mathbb{R}^n} p_X(x-v+z) p_Y(y-z) dz&& z\mapsto y-z\\
&= \int_{\mathbb{R}^n} p_X(x+y-v-z) p_Y(z) dz\\
&= p_{X+Y}(x+y-v).
\end{aligned}
\]
\end{proof}

Now we are ready to prove the main theorem.

\begin{theorem}
\label{thm:ARexpect}
Let $M$ be a Riemannian $G$-homogeneous space with $G$ a compact matrix Lie-group and let
\[
Z_\ell = \exp_{Z_{\ell-1}}(\varepsilon_\ell + \log_{Z_{\ell-1}}(\Phi Z_{\ell-1}))
\]
be an AR(1)-process on $M$ as in \eqref{eq:autoregressivehomo}. Suppose that $\varepsilon_\ell$ and $Z_{\ell-1}$ are absolutely continuous w.r.t. $\mathbb{P}$ with respective unique expected values $\mathbb{E}[\varepsilon_\ell]=0$ and $\mathbb{E}[Z_{\ell-1}]$. Moreover, assume that $\varepsilon_\ell$  and $Z_{\ell-1}$ are reflection symmetric around $0\in T_{Z_\ell} M$ and $\mathbb{E}[Z_{\ell-1}]$, respectively, and that they are all mutually independent for all $\ell$. Furthermore, suppose $\rho$ is the injectivity radius of $M$ and assume that $\varepsilon_{\ell}$ is such that $\mathbb{P}(\norm{\varepsilon_\ell} \leq \rho/2)=1$. Then, it holds that $Z_\ell$ is reflection symmetric around $\Phi \mathbb{E}[Z_{\ell-1}]$ and
\[
\mathbb{E}[Z_\ell]= \Phi \mathbb{E} [Z_{\ell-1}].
\]
\end{theorem}

\begin{proof}
Set $x=\mathbb{E}[Z_{\ell-1}]$. First we show that $p_{Z_{\ell}} (z)$ is reflection symmetric around $\Phi x$ from that $\varepsilon_\ell$ and $Z_{\ell-1}$ are reflection symmetric. By Lemma \ref{lem:distancelem} the Lie group action satisfies the assumptions required for $f$ in Lemma \ref{lem:lipreflect} and therefore $p_{\Phi Z_{\ell-1}}$ is reflection symmetric around $\Phi x$. Moreover, the Riemannian logarithm map is a local isometry, and thus we may once again apply Lemma \ref{lem:lipreflect} to $\log_{Z_{\ell-1}}(\Phi Z_{\ell-1})$. Next we have sum of two independent refection symmetric random variables is reflection symmetric by Lemma \ref{lem:convsymm} and hence
\[
\varepsilon_\ell + \log_{Z_{\ell-1}}(\Phi Z_{\ell-1})
\]
is reflection symmetric around $\log_x(\Phi x)$. Lastly $\exp_x$ is a local isometry again and by Lemma \ref{lem:lipreflect} $Z_{\ell-1}$ is reflection symmetric around
\[
\exp_x(\log_x(\Phi x)) =\Phi x =\Phi \mathbb{E}[Z_{\ell-1}]
\]
This follows from that the Riemannian exponential and logarithm are by construction local isometries, Lemma \ref{lem:distancelem} $ii)$ and a similar symmetry argument, this time on $Z_{\ell-1}$, the probability density function of $Z_\ell$ is necessarily centered at $\Phi x$.

Next we need to show that the function $f:M\rightarrow [0,\infty)$ defined by
\[
f(y) = \int_M p_{Z_\ell}(z) \dist(z,y)^2 \dVol_M(z)
\]
is minimal precisely when $y= \Phi x$. To see this, consider the gradient of $f$  w.r.t. $y$. By definition of the distance function, it holds that $\nabla_y\dist(y,z)^2 = 2\log_y$. Therefore,
\[
\nabla f(y)= -\int_M p_{Z_\ell}(z) 2 \log_{y}(z)  \dVol_M(z)
\]
which will be zero at $y= \Phi x$ since $p_{Z_\ell}$ is rotationally symmetric around $\Phi x$ and since $\log_{\Phi x}(z)$ is anti-symmetric around $\Phi x$. Note that
\[
\mathbb{E}[Z_\ell\mid Z_{\ell-1}=u] = \arginf_{m\in M} \int_M \dist(m,z)^2 p_{Z_\ell\mid Z_{\ell-1}=u}(z) \dVol(z)
\]
and by definition it holds that
\[
p_{Z_\ell\mid Z_{\ell -1}=u}(z)=p_{\exp_u(\varepsilon_\ell +\log_{u}(\Phi u))}(z).
\]
Since $\varepsilon_\ell$ is reflection symmetric around $0\in T_u M$ and since $\varepsilon_\ell$ is independent of the event $Z_{\ell-1}=u$, it holds that $p_{Z_\ell\mid Z_{\ell-1}}$ is reflection symmetric around $\Phi u$. This follows from observing that Lemma \ref{lem:convsymm} applies to $\varepsilon_\ell + \log_u(\Phi u)$ and that Proposition \ref{lem:lipreflect} applies to $\exp_u(\varepsilon_\ell + \log_u(\Phi u))$ since $\exp_u$ is a local isometry. Now since $M$ is a Riemannian homogeneous space of a compact Lie group, $M$ is compact and it has bounded sectional curvature, therefore \cite[Theorem 1.2]{karcher1977riemannian} applies using that $\mathbb{P} (\norm{\varepsilon_\ell} \leq \rho/2)=1$ and we can conclude that
\[
\mathbb{E}[Z_{\ell}\mid Z_{\ell-1}=u] =\arginf_{m\in M} \int_{M} \dist(\Phi u,z)^2 p_{Z_\ell\mid Z_{\ell -1}=u}(z) \dVol(z) = \Phi u.
\]

Hence, it holds that
\[
\mathbb{E}[Z_\ell]=\mathbb{E}[\mathbb{E}[Z_{\ell}\mid Z_{\ell-1}=u]] = \mathbb{E}[\Phi Z_{\ell-1}].
\]
Now by a change of variables by the map $h:M\rightarrow M$, defined by $h(u)= \Phi u$, with inverse $h^{-1}(u)=\Phi^{-1} u$, it holds that
\begin{multline*}
\int_M \dist(\Phi v,u)^2 p_{ \Phi Z_{\ell-1}}(u) \dVol(u) \\= \int_M \dist(\Phi v,\Phi u)^2 p_{\Phi Z_{\ell-1}}(\Phi u) \abs{\det(D h^-1)(u)} \dVol(u)\\ = \int_{M} \dist(v, u)^2 p_{Z_{\ell-1}} \dVol(u),
\end{multline*}
where we have used that $\Phi$ acts $M$ as an isometry on $M$ by \eqref{eq:liegroupactionlip} and therefore $\abs{\det(D h^{-1})}=1$. But the last integral is minimal only when $v=x= \mathbb{E}[Z_{\ell-1}]$. Hence,
\[
\mathbb{E}[Z_\ell] = \Phi \mathbb{E}[Z_{\ell-1}].
\]

\end{proof}

\section{Conjugate Gradient descent to solve system identification}
\label{sec:conjgrad}

Here we propose a way of conjugate gradient descent on the group action representation of autoregressive processes on the Orthogonal/Unitary group and some homogeneous spaces over $O(n)$ and $U(n)$ respectively. Note that inherent to this formulation of AR processes, the high degree of multicollinearity of the parameters will make algorithms such as Newton's method impractical since the second derivative of the M.S.E. will be unstable when noise is small.

\subsection{Orthogonal/Unitary groups}
\label{sec:orth}

Here we shall consider the special case when the manifold is the Lie group itself, i.e. $O(n)$. This part is essentially solved by previous works, see \cite{fiori2009algorithm}, but we state it here for completeness. Due to the group structure of the manifold, the parameter of an AR(1) process may immediately be estimated as
\[
\hat{\Phi}_{\ell}=Y_{\ell+1} Y_{\ell}^{-1},
\]
for every $\ell$. Then the average $\hat{\Phi}$ may be computed using a barycenter algorithm on all the $\hat{\Phi}_{\ell}$ as in \cite{pennec2013exponential}. The numerics turn out to work as expected as seen in Figure \ref{fig:Liegroupaverage}.

\subsection{Stiefel Manifolds}
\label{sec:stief}

Consider  $M=\St_{n,k}$, the real compact Stiefel manifold with the natural left $O(n)$-action. We want to minimize the equation $f:O(n)\rightarrow \mathbb{R}_{\geq 0}$ defined by
\[
f(\Phi) = \sum_{\ell=1}^N \dist(Z_{\ell+1},\Phi Z_{\ell})^2.
\]
The issue with solving the minimization problem on Stiefel manifolds for a general $f$ using a Newton algorithm has been treated in \cite{edelman1998geometry}, though this requires a computation of the Euclidean gradient and Hessian of $f$.

For the induced metric (aka canonical metric) on $M$ the geodesic exponential map corresponds to
\[
\exp_X\begin{pmatrix}
A\\ B
\end{pmatrix}= \exp_{\mathrm{M}} \begin{pmatrix}
A & -B^T\\ B& 0
\end{pmatrix} X
\]
where $\exp_{\mathrm{M}}$ denotes the matrix exponential. In contrast to the geodesic exponential map on $O(n)$ the corresponding inverse map is not easy to find. On the other hand we shall make use of the pad\'e approximation of the exponential map to get an extrinsic expression for $f$. Note that
\[
\exp_X(\Delta)=\exp_{\mathrm{M}}(\pi_{Horizontal}\Delta X^T)X \approx \left(I_n-\frac{1}{2}\Delta X^T \right)^{-1} \left(I_n+\frac{1}{2}\Delta X^T\right)X,
\]
see \cite{jauch2020random,yuan2019global,zhu2019matrix}, and thus given $X,Y\in M$ by straightforward algebra it holds that
\[
\log_X(Y)\approx 2(Y-X) (I_k +X^TY)^{-1}.
\]
\begin{remark}
There is a method developed by Ralf Zimmermann, see \cite{zimmermann2017matrix}, which approximates the logarithm map more accurately. The proposed method here could be modified in this regard, though we believe this second order approximation is sufficient for our purposes.
\end{remark}
Since 
\[
\dist(X,Y)^2= \norm{\log_X(Y)}^2 \approx \frac{1}{2} \tr(\log_X(Y)^T \left(I_k-\frac{1}{2} XX^T\right)\log_X(Y))
\]
we have the following approximation of $f$,
\[
\begin{aligned}
f(\Phi) \approx \sum_{j=1}^N \tr &\left(\left(I_k+Y^T_{j}\Phi^T Y_{j+1}\right)^{-1}\left( 3I_k - Y_j^T \Phi^TY_{j+1}-Y_{j+1}^T \Phi Y_j\right.\right.\\ 
& \left. \left.- Y_j^T\Phi^T Y_{j+1} Y_{j+1}^T \Phi Y_j\right) \left( I_k +Y_{j+1}^T \Phi Y_j\right)^{-1} \right).
\end{aligned}
\]

To simplify notation we shall denote
\[
A=\left(I_k + Y_j^T \Phi^T Y_{j+1}\right)^{-1},
\]
\[
B=3I_k -Y_{j+1}^T\Phi Y_j - Y_j^T \Phi^T Y_{j+1} - Y_j^T\Phi Y_{j+1} Y_{j+1}^T \Phi Y_j,
\]
and
\[
C= \left(I_k + Y_{j+1}^T \Phi Y_{j}\right)^{-1}
\]
Consider the inversion map $\inv X\mapsto X^{-1}$ on $\mathrm{GL}(n)$, it holds that
\[
D\inv_X(\Delta) = -X^{-1} \Delta X^{-1}.
\]
The Euclidean derivative of $f$ w.r.t. $\Phi$ is the following linear map
\[
\begin{aligned}
\nabla f_\Phi(\Delta)= -2\sum_{j=1}^N \tr &\left(A  Y_j^T \Delta^T Y_{j+1} AB C+ A \left(  Y_j^T \Delta^T Y_{j+1} +  Y_{j+1}^T \Delta Y_{j}\right.\right.\\
&\left.\left.+Y_j^T\Delta^T Y_{j+1} Y_{j+1}^T \Phi Y_{j} + Y_j^T \Phi^T Y_{j+1} Y_{j+1}^T \Delta Y_{j}\right)C\right.\\
& \left.+ABCY_{j+1}^T \Delta Y_{j}C\right).
\end{aligned}
\]

Thus, we have the following scheme for estimating $\Phi\in O(n)$, using conjugate gradient decent, given $\{Y_j\}_{j=1}^{N+1}\subset \St_{n,k}$ and initial guess $\Phi_0$.

\begin{algorithm}
\caption{Estimating system parameters on $\St_{n,k}$}
\label{alg:stiefelgrad}
\begin{algorithmic}[1]
\State Compute 
\[
\Delta=\sum_{j}^{\dim (O(n))}\nabla f_{\Phi_0}(\Phi_0 e_j) \cdot e_j
\]
where $e_j$ is an ON basis on $\mathfrak{so}(n)$.
\State Find $\tau\in [-1,1]$ such that
\[
f(\Phi_0 \exp(-\tau \Delta))
\]
is minimal.
\State Update $\Phi$ by
\[
\Phi= \Phi_0\exp(-\tau \Delta)
\]
\State Repeat step 1-3 with $\Phi_0=\Phi$ but with $\Delta$ orthogonal to previous $\Delta$ using a Grahm Schmidt procedure, do this a select number of times or until $\text{steps}=\dim(O(n)$.
\State Repeat step 1-4 with $\Phi_0=\Phi$ until tolerance is small enough, e.g. $\norm{\Delta} \abs{\tau}<Tol$.
\end{algorithmic}
\end{algorithm}

Figures \ref{fig:graddescentst},\ref{fig:graddescentnoise},\ref{fig:graddescentstoverd},\ref{fig:graddescentstoverk} show some simulations estimating the system parameter of an AR(1) process on the Stiefel manifolds.

\subsection{Grassmann Manifolds}
\label{sec:grass}

Here we consider the Grassmann manifold $M=\Gr_{n,k}$, we identify $\Gr_{n,k}$ as equivalence classes of $n\times k$ matrices by $\St_{n,k}/O(k)$. Once again given observations $\{Y_j\}_{\ell=1}^{N+1}\subset M$ we want to find $\Phi \in O(n)$ such that
\[
\sum_{\ell=1}^N \dist(Y_{\ell+1},\Phi Y_{\ell})^2 \leq \sum_{\ell=1}^N \dist(Y_{\ell+1},\Psi Y_{\ell})^2 \quad \forall \Psi  \in O(n).
\]
At $[I_{n,k}]$ the tangent space can be identified as
\[
T_{I_{n,k}}\Gr_{n,k}=\left\{\begin{pmatrix}
0\\ V
\end{pmatrix}:V\in \mathcal{M}_{n-k,k}(\mathbb{R})\right\}.
\]
In general it is very difficult to write out an explicit parametrizable tangent vector, so the Question is now; given $\Delta\in T_Y \St_{n,k}$ how do we get $[\Delta]\in T_{[Y]}\Gr_{n,k}$ in order to compute $\dist([X],[Y])$? A general tangent vector in $T_{Y}\St_{n,k}$ may be written as
\[
\Delta = \begin{pmatrix}
A& -B^T\\B& 0
\end{pmatrix} X\qquad A\in \mathfrak{o}(k), B\in \mathcal{M}_{n,k}(\mathbb{R})
\]
and the corresponding Grassmannian tangent (i.e. in horizontal direction) is
\[
[\Delta] = \begin{pmatrix}
0& -B^T\\B& 0
\end{pmatrix} X\qquad B\in \mathcal{M}_{n,k}(\mathbb{R}).
\]
Now since $X^T \Delta=A$ is anti-symmetric we have the following Grassmann metric at the base point $X\in \St_{n,k}$
\[
g_X(V,W)^2=\tr(V^T(I_n-XX^T)W),
\]
note the difference of a factor of $\frac{1}{2}$ compared to the Stiefel case. Moreover, note that this metric is degenerate on the Stiefel manifold but non-degenerate precisely on the Grassmann manifold. Therefore the map we want to minimize can be approximated as follows:
\[
F(\Phi)=\sum_{\ell=1}^N 4 \tr\left(A (I_k-Y_{\ell}^T\Phi^T Y_{\ell+1}Y_{\ell+1}^T \Phi Y_{\ell})C \right)
\]
where $A$ and $C$ are as previously defined. Let
\[
D=I_k-Y_{\ell}^T\Phi^T Y_{\ell+1}Y_{\ell+1}^T \Phi Y_{\ell}.
\] 
Then, the gradient, with respect to $\Phi$, is

\begin{multline*}
\nabla f_\Phi(\Delta)= -\sum_{\ell=1}^N 4 \tr \left(A  Y_\ell^T \Delta^T Y_{\ell+1} AD C+ A \left( Y_\ell^T\Delta^T Y_{\ell+1} Y_{\ell+1}^T \Phi Y_{\ell} \right.\right.\\
\left.\left. + Y_\ell^T \Phi^T Y_{\ell+1} Y_{\ell+1}^T \Delta Y_{\ell}\right)C +ADCY_{\ell+1}^T \Delta Y_{\ell}C\right).
\end{multline*}

Now the algorithm is precisely as done in the case of the Stiefel manifoldl, see Algorithm \ref{alg:stiefelgrad}, but with the above gradient instead. In Figures \ref{fig:graddescentgr},\ref{fig:graddescentgrnoise},\ref{fig:graddescentgroverd},\ref{fig:graddescentgroverk} one can find some simulations estimating the parameters of an AR(1) process using this algorithm.

\appendix

\section{Basic Riemannian Geometry}
\label{sec:basicriemgeom}

To make this paper more accessible, we shall here give a brief introduction to some concepts in differential geometry that are necessary to understand the results of this paper. Everything discussed in this section can be found in \cite{lee2009manifolds} which gives a good modern summary of differential geometry.

Most fundamental is the manifold, its aim is to generalize $\mathbb{R}^n$ to mathematical objects that locally behave like $\mathbb{R}^n$, a framework which allows for analysis (both stochastic and classical) on them. Abstractly, one starts with a set $M$. A \textit{chart} on $M$ is a subset $U_\alpha\subseteq M$ and an injective map $u_{\alpha}:U\rightarrow \mathbb{R}^n$, often denoted as $( U_\alpha,u_\alpha)$. Two charts, $(U_\alpha,u_\alpha)$ and $(U_\beta,v_\beta)$ are said to be (smoothly) compatible either if, $U_\alpha \cap U_\beta = \emptyset$, or if $U_\alpha \cap U_\beta \neq \emptyset$, then the map
\[
u_\alpha \circ (v_\beta\mid_{U_\alpha \cap U_\beta})^{-1} :v_\beta(U_\alpha \cap U_\beta) \rightarrow u_\alpha(U_\alpha \cap U_\beta)
\]
is a diffeomorphism between open subsets in $\mathbb{R}^n$.  An \textit{atlas} is a collection of compatible charts $\mathcal{A}=\{( U_\alpha,u_\alpha)\}_{\alpha \in \mathscr{A}}$, with $\mathscr{A}$ some indexing set, such that
\[
\bigcup_{\alpha \in \mathscr{A}} U_\alpha = M.
\]
Moreover, it is a classical exercise to show that compatibility is an equivalence relation on atlases. The charts allow for differentiation of functions between manifolds. Consider $f:M\rightarrow N$, where $M$ and $N$ are smooth manifolds. Fix $p\in M$. Let $u:U\rightarrow \mathbb{R}^m$ be a chart around $p$, let $v:V\rightarrow \mathbb{R}^n$ be a chart around $f(p)$ and suppose $f(U)\subset V$. Then, $f$ is called \textit{smooth} at $p$ if 
\[
v\circ f \circ u^{-1}:u(U) \rightarrow v(V)
\]
is a smooth function at $u(p)$ in the sense of functions between finite dimensional vector spaces. Next, we have the tangent space.

\begin{definition}
Let $M$ be a smooth manifold and let $p\in M$ be a point. Denote $C_p$ as the set of all curves $\gamma:(-\varepsilon,\varepsilon): \rightarrow M$ such that $\gamma(0)=p$. We say the two curves $\gamma_1, \gamma_2\in C_p$ are equivalent, $\gamma_1 \sim \gamma_2$ if for all smooth $f:M\rightarrow \mathbb{R}$, it holds that
\[
\ddf { }{ }{t} f\circ \gamma_1 \mid_{t=0} = \ddf { }{ }{t} f\circ \gamma_2 \mid_{t=0}.
\]
Then, the \textit{tangent space} at $p$ is the vector space
\[
T_p M := C_p/\sim.
\]
The \textit{tangent bundle} $TM$ is the disjoint union of all the tangent spaces, i.e.
\[
TM = \bigsqcup_{p\in M} T_pM
\]
\end{definition}

Moreover, the \textit{tangent map} of $f:M\rightarrow N$ at $p$, is the map $T_p f:T_pM \rightarrow T_{f(p)}M$ defined by 
\[
\gamma\in C_p \longmapsto f\circ \gamma \in C_{f(p)}.
\]
It is easy to check that this is a well-defined assignment.

In order to solve extremal problems on manifolds, one has the need of (topological) metrics. In differential geometry the topological metric on a manifold is called the \textit{distance}, $\dist$, and it is generated by the so called \textit{Riemannian metric}. For every $p\in M$, the Riemannian metric at $p$ is the linear map
\[
g_p : T_pM \rightarrow T_pM
\]
such that $g_p$ is positive definite and symmetric. Furthermore, as we vary the base point $p$ this assignment of linear maps should be smooth, this means that 
\[
g:p\longmapsto g_p(T_{u(p)}u^{-1} v,T_{\tilde{u}(p)}\tilde{u}^{-1}w), \qquad v,w\in \mathbb{R}^n
\]
is a smooth assignment where $u,\tilde{u}:U\rightarrow \mathbb{R}^n$ are charts around $p$. If $\gamma:[a,b]\rightarrow M$ is a curve on $M$ then, the distance between $p=\gamma(a)$, $q=\gamma(b)$ along $\gamma$ is defined as
\[
\dist_\gamma (p,q)=\int_a^b g_{\gamma(s)}(T_s\gamma,T_s\gamma)^{1/2} ds.
\]
The \textit{geodesic} curve going from $p$ to $q$ is any curve $\tau:[a,b]\rightarrow M$ such that $\dist_\tau(p,q) \leq\dist_\gamma(p,q)$ for all curves $\gamma$ starting at $p$ and ending at $q$. Moreover, the \textit{distance} $\dist(p,q)$ between two points $p,q\in M$ is defined as the distance along the geodesic going from $p$ to $q$. Observe here that the distance function is a topological metric on $M$. Note, that $\tau$ is only guaranteed to be unique when $p$ and $q$ lie in a neighborhood small enough. Since geodesics are locally unique one can define a chart which is locally an isometry between $T_p M$, with metric from the inner product $g_p$ and the distance function on $M$.

\begin{definition}
\label{def:geodexplog}
Let $p\in M$ be a point and let $U\subseteq T_pM$ be open and such that all length minimizing geodesics $[0,\norm{V}]\rightarrow M$ with initial velocity $X= V/\norm{V}$, $V\in U$ are unique. Then, the assignment $\exp_p(V) = \gamma_p(V)$ where $\gamma_p$ is the unique geodesic starting at $p$ with initial velocity $V/\norm{V}$ and length $\norm{V}$, is a map $U\rightarrow M$ called the \textit{Riemannian exponential map} at $p$. It's inverse $\log_p: \exp_p(U)\rightarrow U$ is the \textit{geodesic normal chart} around $p$, and it's associated coordinates are called the \textit{geodesic normal coordinates}. The maximal $\rho_p >0$ such that $\exp_p$ is injective when it is restricted to the ball
\[
B_\rho(0) = \{x\in T_p X: \norm{x}<\rho\}
\]
is referred to as the \textit{injectivity radius of $M$ at $p$}. The minimal injectivity radius at all points on $M$, $\rho =\inf_{p\in M} \rho_p$ is referred to as the \textit{injectivity radius of $M$}.
\end{definition}

The geodesic normal coordinates are of particular importance. In geodesic normal coordinates, the gradient and Hessian are reduced to the Euclidean gradient and Hessian. That is, if $x_i$ are geodesic normal coordinates centered at $p$, i.e. the functions generated by the exponential map and some orthonormal basis on $T_pM$, then 
\[
\nabla f(p) = \frac{\partial f}{\partial x_i}(p) \partial_{x_i}
\]
and
\[
\Delta f(p) = \frac{\partial^2f}{\partial x_i^2} (p).
\]
Utilizing spherical geodesic normal coordinates centered around $p$, 
\[(r,\theta_1,\dots , \theta_{n-1}),\]
i.e. the transformed geodesic normal coordinates $x_i$ pre-composed with the transformation that defines the spherical coordinates in $\mathbb{R}^n$, one has $r=\dist(p,q)$. Moreover, we shall mean by the coordinate $(-r,\theta_1,\dots, \theta_{n-1})$ the reflected vector that lies in the opposite direction of $(r,\theta_1,\dots,\theta_{n-1})$. Moreover, manifolds for which the Riemannian exponential map is defined on all of $T_p M$ is referred to as \textit{geodesically complete}.

A Lie group $G$ is a smooth manifold and a group such that the group multiplication map
\[
\mu:(x,y) \longmapsto x\cdot_G y,
\]
and the inversion map
\[
\inv : x\longmapsto x^{-1}
\]
are smooth maps. The left action $L_p:G\rightarrow G$ is defined by $L_p(q)=\mu(p,q)$ and the right action $R_p:G \rightarrow G$ is defined by $R_p(q)=\mu(q,p)$. Sometimes, there exists a bi-invariant Riemannian metric on $G$, i.e. $g_p$ is such that
\[
g_{p}(v_p,w_p)=g_{\mu(q,p)}(T_p L_q v_p,T_pL_q w_p)=g_{\mu(p,q)}(T_p R_q v_p,T_pR_q w_p).
\]
A very important special case are the matrix Lie groups, which are the Lie groups that are closed subgroups of invertible linear maps $\mathrm{GL}(n)$ over $\mathbb{R}$ or $\mathbb{C}$. In particular, the Orthogonal group $O(n)$ is the real (Matrix) Lie group defined as the set
\[
O(n)=\left\{ A\in \mathrm{GL}(n): AA^{T}= I_n \right\}
\]
together with the multiplication map and inversion map inherited from $\mathrm{GL}(n)$. The tangent space of $O(n)$ at the identity is precisely the Lie algebra $\mathfrak{o}(n)$ which is the space of alternating $n\times n$ matrices. On $O(n)$ we have the invariant bi-invariant metric, also known as the \textit{Frobenious inner product}
\[
g_{e}(X,Y) = \tr (X^T Y),
\]
for $X,Y\in \mathfrak{o}(n)$, then it is extended to all other points through the bi-invariance identity. For this metric, we have that the Riemannian exponential map coincides with the matrix exponential,
\[
\exp_{\mathrm{M}}(X) := \sum_{k=0}^\infty \frac{1}{k!} X^k.
\]
Note that we still distinguish between the two maps by designating the subscript $M$ for the matrix exponential map. 

Finally, one may consider a closed Lie subgroup $H$ of some Lie group $G$ and consider the quotient $G/H$. In general it is not a Lie group, since $H$ may not be normal. But it still has a lot of useful structure from the underlying Lie group. Such quotients are referred to as \textit{homogeneous spaces}. If $G$ is a Lie group with bi-invariant Riemannian metric $g$ one define the \textit{induced metric} on the homogeneous space $G/H$ as restricting the metric to the horizontal part of the tangent space. More precisely since $\mathfrak{h}\subset \mathfrak{g}$ as a vector space we may (orthogonally) decompose any element $X\in \mathfrak{g}$ into horizontal part $\pi(X) \in T_{[e]}G/H$ and a vertical part $V(X)\in \mathfrak{h}$. In particular, by considering $O(n-k)$ as a subgroup of $O(n)$, $k<n$, through the embedding $\iota: O(n-k)\rightarrow O(n)$ defined by
\[
\iota: X \longmapsto \begin{pmatrix}
I_{k} & 0\\ 0 & X
\end{pmatrix}.
\]
One consider the quotient space $\St_{n,k} := O(n)/O(k)$ called the Stiefel manifold and the quotient space $\Gr_{n,k}:=\St_{n,k}/O(k)$ called the Grassmann manifold. Note that here $O(k)$ is considered as the embedded subgroup in $O(n)$ by
\[
X\longmapsto \begin{pmatrix}
X & 0\\ 0 & I_{n-k}.
\end{pmatrix}
\]
Observe that one may interpret the Stiefel manifold as the space of orthogonal $n$ by $k$ matrices and the tangent space of $\St_{n,k}$ at $I_{n,k}$ as the space of $n$ by $k$ matrices of the form
\[
T_{I_{n,k}} \St_{n,k} = \left\{ \begin{pmatrix}
A\\ B
\end{pmatrix} \in \mathcal{M}_{n,k}(\mathbb{R}): A\in \mathfrak{o}(k) \text{ and } B\in \mathcal{M}_{n-k,k} \text{ arbitrary} \right\}.
\]
By considering the induced metric on $\St_{n,k}$ from the underlying metric on $O(n)$ we get a Riemannian structure on $\St_{n,k}$ and the Riemannian exponential map on $\St_{n,k}$ at $I_{n,k}$ is the map
\[
\exp_{I_{n,k}}\left(\begin{pmatrix}
A\\B
\end{pmatrix}\right) = \exp_{\mathrm{M}}\left( \begin{pmatrix}
A &-B^T\\ B & 0
\end{pmatrix}\right)
\]
and the exponential map at some arbitrary point $p\in \St_{n,k}$ is attained by post-multiplication from the right by $p$. Since there are infinitely many representations of $[X]\in \St_{n,k}$ as the equivalence class of matrices $X\in \mathfrak{o}(n)$, the choice of alternating matrix above is referred to as the horizontal projection
\[
\pi_{\text{Horizontal}}: \begin{pmatrix}
A & -B^T\\ B& C
\end{pmatrix} \longmapsto \begin{pmatrix}
A & -B^T\\ B& 0
\end{pmatrix}
\]
where $A\in \mathfrak{o}(k)$, $B\in \mathcal{M}_{n-k,k}$ and $C\in \mathfrak{o}(n-k)$.

\section{Simulations}
\label{sec:simulations}

Here we shall give an outline on some performed simulations for the respective algorithm. All the simulations here are for some special cases of $O(n)$-homogeneous spaces $M$. Moreover, for all cases, we start by  choosing some random element $\Phi \in SO(n)$ with standard deviation of $\dist(\Phi,I_n)\sim 0.01$. The noise terms are generated $N$-times by considering $\varepsilon_j$ as normal distribution on $\mathfrak{o}(n)$ with mean $0\in \mathfrak{o}(n)$ and small standard deviation $\sigma \id_{\mathfrak{o}(n)}$. This may be done by taking the normal distribution on $\mathcal{M}_{n,n}$ and then projecting to the subspace of anti-symmetric matrices. Then starting at some point $p\in M$, the elements $Z_{j}=\exp_{\mathrm{M}}(\varepsilon_j)\Phi Z_{j-1}$ are inductively generated with $Z_{0}=p$. Even though this is not exactly analogous to the formulation in Equation \eqref{eq:liegroupactionAR}, by Baker-Campbell-Hausdorff this is a first order approximation of the latter. Now we apply an algorithm for each case given in Section \ref{sec:conjgrad} which gives an estimate of $\hat{\Phi}$ and then the error is computed by $\dist(\Phi,\hat{\Phi})$.

For the case when $M=O(n)$ we apply the algorithm as outlined in Section \ref{sec:orth}, i.e. through the barycenter algorithm compute the average of all the $Z_{j+1}^{-1} Z_j$. The error between the estimate and $\Phi$ is of the order $\mathcal{O}(\sqrt{N}^{-1})$ with coefficient depending on all other parameters, $n,\Phi,\sigma$. This is expected since this algorithm is a generalization of sample average estimates in Euclidean space, which also has the same order term. In figure \ref{fig:Liegroupaverage} this is illustrated for the case $O(20)$ and $\sigma=0.1$.

\begin{figure}[H]
\centering
\includegraphics[width=0.9\textwidth]{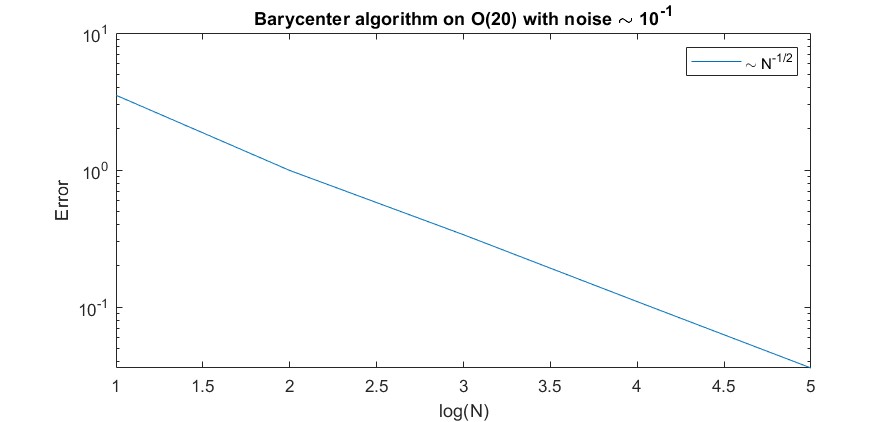}
\caption{Simulation an AR(1) process using barycenters on $O_{20}$ where the noise is of approximate amplitude $10^{-1}$.}
\label{fig:Liegroupaverage}
\end{figure}

For the case when $M=\St_{n,k}$ the algorithm given in Section \ref{sec:stief} gives an estimate and the error once again is of the order $\mathcal{O}(\sqrt{N}^{-1})$. This is illustrated in Figure \ref{fig:graddescentst} for the case $\St_{15,5}$.

\begin{figure}[H]
\centering
\includegraphics[width=0.9\textwidth]{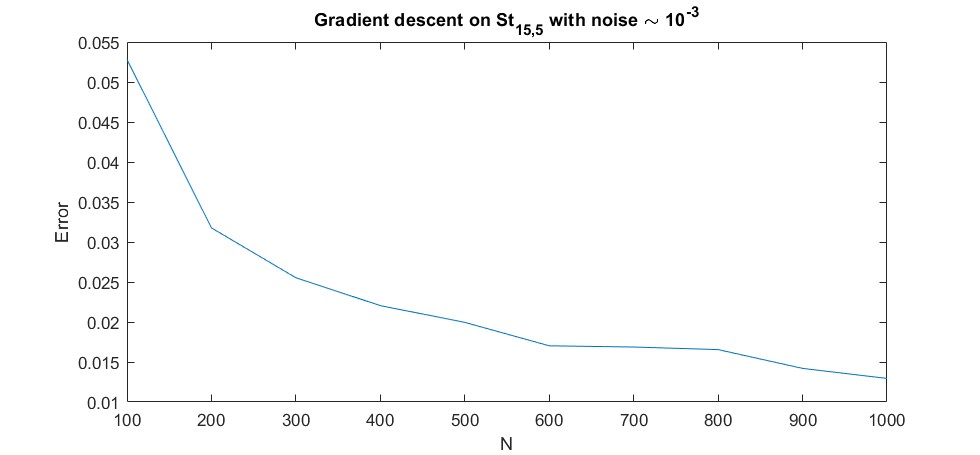}
\caption{Simulation of the gradient descent method on $\St_{15,5}$ where the noise is of approximate amplitude $10^{-3}$.}
\label{fig:graddescentst}
\end{figure}

In Figure \ref{fig:graddescentnoise} we can see that for $\St_{20,5}$ the error is dependent on the size of the noise, where $N=60$.

\begin{figure}[H]
\centering
\includegraphics[width=0.9\textwidth]{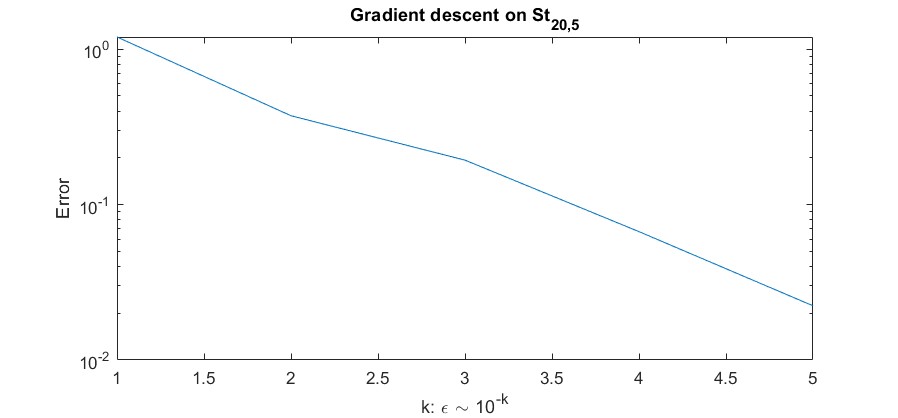}
\caption{Simulation of the gradient descent method on $\St_{20,5}$ over different sizes of the noise.}
\label{fig:graddescentnoise}
\end{figure}

Another observation is how the error term is dependent on the dimensional parameters $n$ and $k$. In Figure \ref{fig:graddescentstoverd} we fix $k=5$ and $N=200$, and we see an increasing tendency of the error when $n=d$ gets larger.

\begin{figure}[H]
\centering
\includegraphics[width=0.9\textwidth]{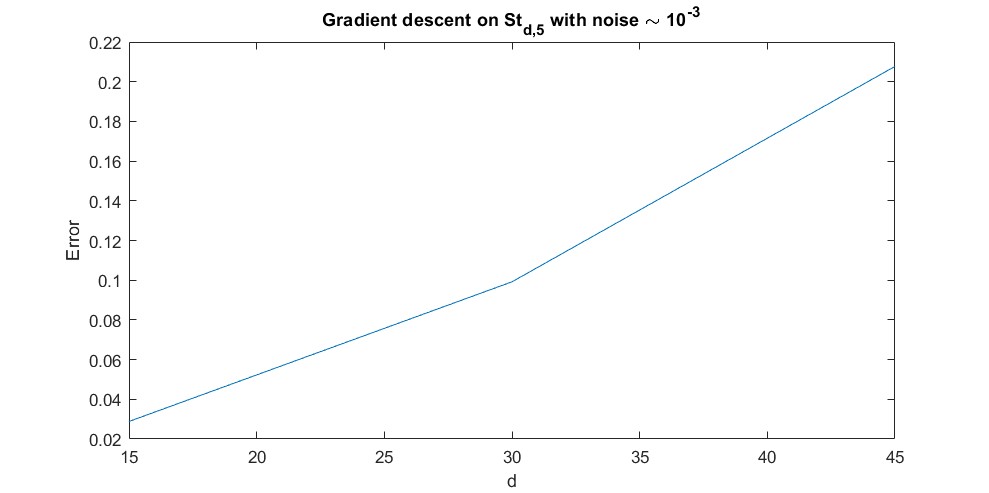}
\caption{ Simulation of the gradient descent method on $\St_{d,5}$ over different $d$ where the noise is of approximate amplitude $10^{-3}$.}
\label{fig:graddescentstoverd}
\end{figure}

The opposite behavior can be seen when increasing the parameter $k$ in $\St_{n,k}$ when $k \lesssim n/2$ as seen in Figure \ref{fig:graddescentstoverk} for $\St_{20,k}$ and $N=200$. This is the case even though the dimension of $\St_{n,k}$ increases, the degrees of freedom of the parameter $\Phi$ decreases when $k$ increases.

\begin{figure}[H]
\centering
\includegraphics[width=0.9\textwidth]{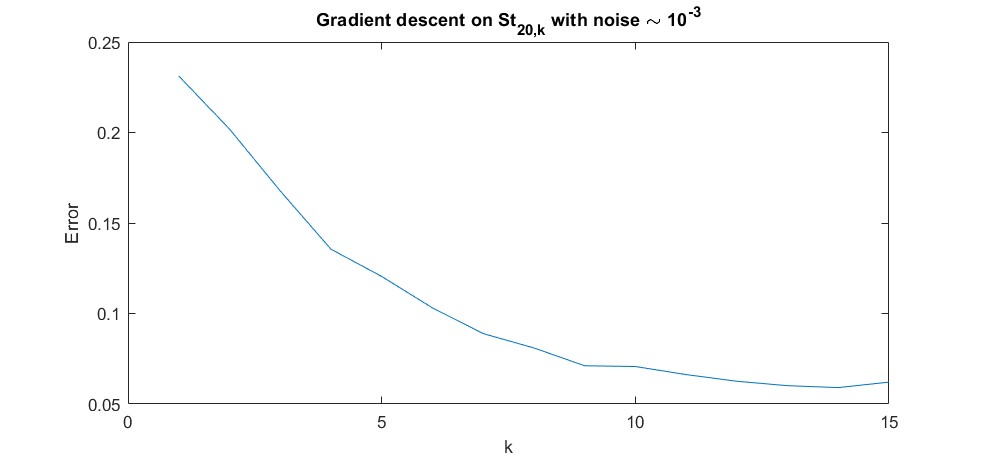}
\caption{Simulation of the gradient descent method on $\St_{20,k}$ over different $k$ where the noise is of approximate amplitude $10^{-3}$.}
\label{fig:graddescentstoverk}
\end{figure}

Lastly, for the case $M=\Gr_{n,k}$, we similarly apply the algorithm in Section \ref{sec:grass} to the simulated data set. Once again the error term is of the order $\mathcal{O}(\sqrt{N}^{-1})$, as seen in the case for $\Gr_{15,5}$ and $N=200$ in Figure \ref{fig:graddescentgr}.

\begin{figure}[H]
\centering
\includegraphics[width=0.9\textwidth]{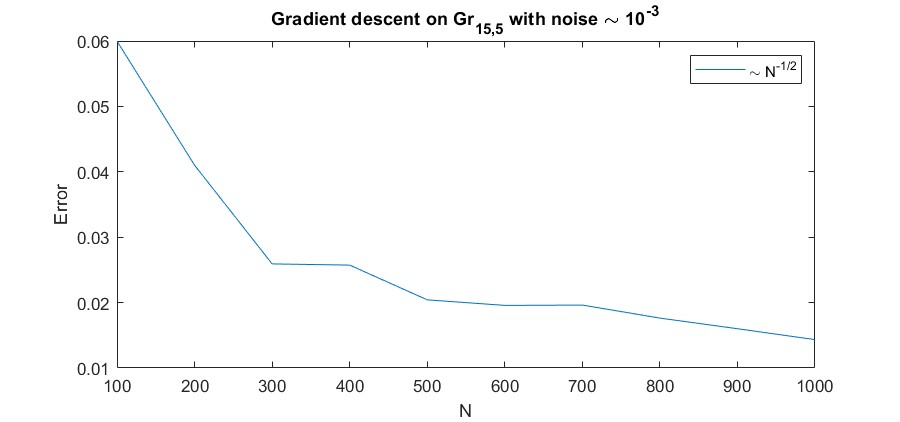}
\caption{Simulation of the gradient descent method on $\Gr_{15,5}$ where the noise is of approximate amplitude $10^{-3}$.}
\label{fig:graddescentgr}
\end{figure}

Furthermore, we have a similar dependency on the noise term $\sigma$, see Figure \ref{fig:graddescentgrnoise} for the case $\Gr_{20,5}$ and $N=60$.

\begin{figure}[H]
\centering
\includegraphics[width=0.9\textwidth]{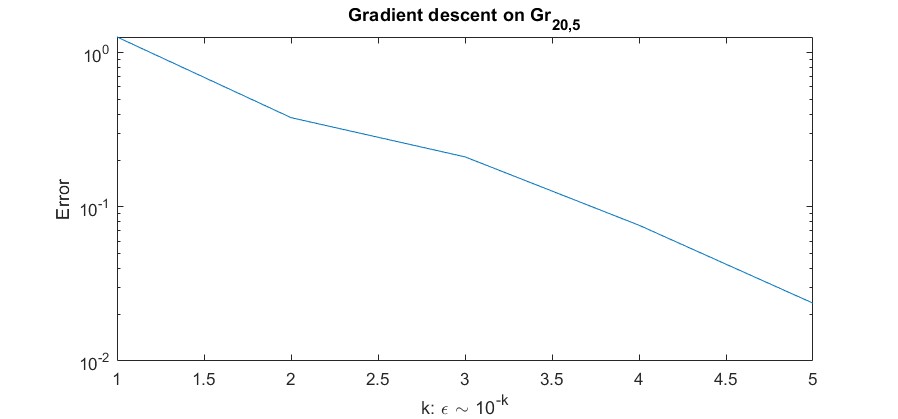}
\caption{Simulation of the gradient descent method on $\Gr_{20,5}$ over different sizes of the noise.}
\label{fig:graddescentgrnoise}
\end{figure}

For the same reason as for the Stiefel manifolds, higher degrees of freedom for $\Phi$ means larger error. Fixing $k$ we see the same behavior as previously, see Figure \ref{fig:graddescentgroverd} for the case $k=5$. 

\begin{figure}[H]
\centering
\includegraphics[width=0.9\textwidth]{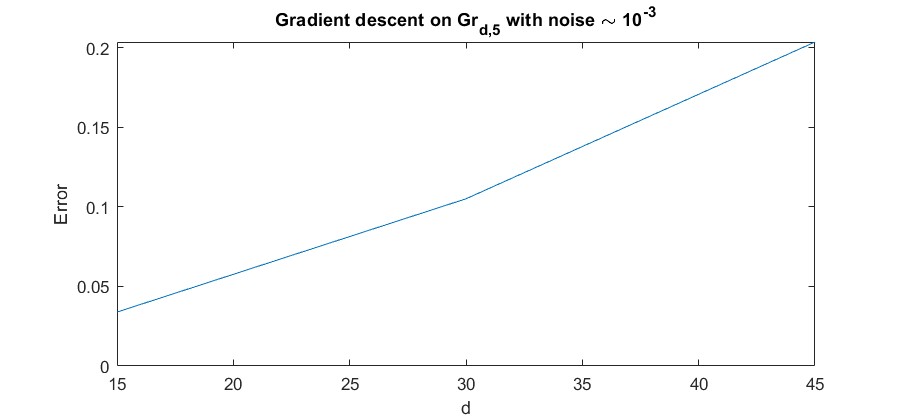}
\caption{Simulation of the gradient descent method on $\Gr_{d,5}$ over different $d$ where the noise is of approximate amplitude $10^{-3}$.}
\label{fig:graddescentgroverd}
\end{figure}

In Figure \ref{fig:graddescentgroverk} we observe the same behavior as in the case of the Stiefel manifold, for the same reason.

\begin{figure}[H]
\centering
\includegraphics[width=0.9\textwidth]{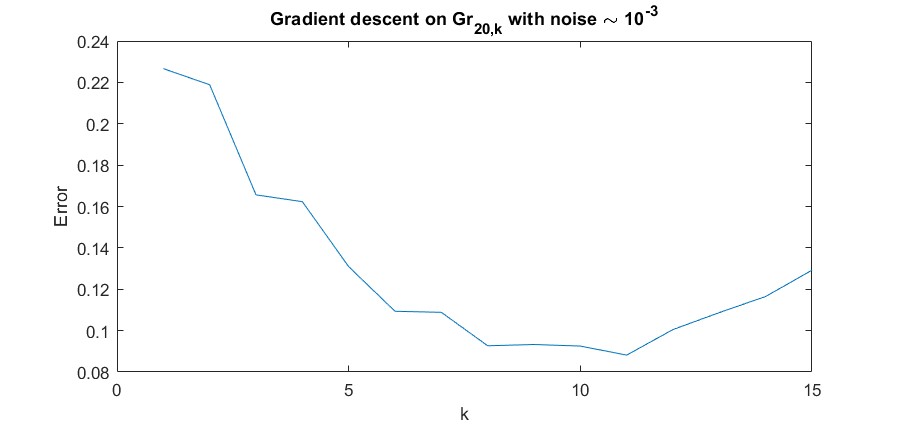}
\caption{Simulation of the gradient descent method on $\Gr_{20,k}$ over different $k$ where the noise is of approximate amplitude $10^{-3}$.}
\label{fig:graddescentgroverk}
\end{figure}

\printbibliography

\end{document}